\newtheorem{thm}{Theorem}[section]
\newtheorem{prop}[thm]{Proposition}
\newtheorem{lem}[thm]{Lemma}
\theoremstyle{definition}
\newtheorem{rem}[thm]{Remark}
\numberwithin{equation}{section}
\def\eq#1{{\rm(\ref{#1})}}
\def\Eq#1#2{\ifthenelse{\equal{#1}{*}}
  {\begin{equation*}\begin{aligned}[]#2\end{aligned}\end{equation*}}
  {\begin{equation}\begin{aligned}[]\label{#1}#2\end{aligned}\end{equation}}}
\def\E{\mathscr{E}}
\def\G{\mathscr{G}}
\def\M{\mathscr{M}}
\def\P{\mathscr{P}}
\newcommand{\operator}[1]{\mathop{\vphantom{\sum}\mathchoice
{\vcenter{\hbox{\LARGE $#1$}}}
{\vcenter{\hbox{\large $#1$}}}{#1}{#1}}\displaylimits}
\def\Mst_#1^#2{\operator{\mathscr{M}_{\mbox{\scriptsize$\#$}}\!\!}_{#1}^{#2}\,\,}
\newcommand\R{\mathbb{R}}
\newcommand\N{\mathbb{N}}
\newcommand{\abs}[1]{\left| #1 \right| }
\DeclareMathOperator{\sign}{sign}
\newcommand{\Hc}{\mathscr{H}}
\title[Estimating the Hardy constant of nonconcave Gini means]{Estimating the Hardy constant of \\ nonconcave Gini means}
\author{Zsolt P\'ales}
\address{Institute of Mathematics, University of Debrecen, Pf.\ 400, 4002 Debrecen, Hungary}
\email{pales@science.unideb.hu}
\author{Pawe\l{} Pasteczka}
\address{Institute of Mathematics, Pedagogical University of Krak\'ow,  Podchor\k{a}\.{z}ych str 2, 30-084 Krak\'ow, Poland}
\email{pawel.pasteczka@up.krakow.pl}
\thanks{The first author was supported by the K-134191 NKFIH Grant.}
\keywords{Hardy inequality, Hardy constant, homogeneous quasideviation mean, Jensen concavity, Gini mean}
\subjclass[2010]{26D15, 26E60, 39B62}
\newcommand{\HQD}{\mathcal{F}} %Homogeneous quasideviations
\begin{document}
\begin{abstract}
The extension of the Hardy-Knopp-Carleman inequality to several classes of means was the subject of numerous papers.
In the class of Gini means the Hardy property was characterized in 2015 by the second author. The precise value of the associated  Hardy constant was only established for concave Gini means by the authors in 2016. The determination Hardy constant for nonconcave Gini means is still an open problem. The main goal of this paper is to establish sharper upper bounds for the Hardy constant in this case. The method is to construct a homogeneous and concave quasideviation mean which majorizes the nonconcave Gini mean and for which the Hardy constant can be computed.
\end{abstract}
\maketitle

\section{Introduction}

There are a number of recent results, obtained by the authors, which allow one to establish the Hardy constants for various families of means. Most of them are based on the so-called Kedlaya's property \cite{Ked94,Ked99} in the background, which unifies their assumptions. In the most natural setting, we assume that a mean is concave, homogeneous, and repetition invariant (then it is also monotone). These assumptions are relaxed for example using homogenizations techniques \cite{PalPas19a,PalPas20}, or by replacing repetition invariance by a weaker axiom \cite{Pas21a}. 

However we have not been able to relax the concavity assumption till now. Due to this reason, it was difficult to establish an upper estimation for the Hardy constant for Gini means which are nonconcave. We note that some lower estimation can be obtained by using some general results from \cite{PalPas16}. In this paper we apply a certain concavization techniques to improve the trivial upper bound (which can be obtained directly from \cite{PalPas16}) as well as a more recent bound from \cite{Pas2003b}.
For this purpose, we consider Gini means in the quasideviation framework and construct a concave and homogeneous quasideviation mean which is above the given (nonconcave) Gini mean. The problem of calculating the Hardy constant for concave quasideviation means has been solved recently in \cite{PalPas20}. The key auxiliary results are contained in Lemma~\ref{lem:fpq} which enables us the described construction.

\section{Means and their properties}

A function $\M \colon \bigcup_{n=1}^\infty \R_+^n \to \R_+$ such that $\min(x)\le \M(x)\le \max(x)$ holds for all $x$ in the domain of $\M$ is called a \emph{mean (on $\R_+$)}.Throughout the present note all considered means are defined on $\R_+$, thus we can omit the domain of a mean whenever convenient. We also adopt the standard convention that natural properties like convexity, homogeneity, etc. refer to the respective properties of the $n$-variable function $\M|_{\R_+^n}$ to be valid for all $n \in\N$. 

For a given mean $\M$ let $\Hc(\M)$ denote the smallest nonnegative extended real
number, called the \emph{Hardy constant of $\M$}, such that, for all sequences $(x_1,x_2,\dots)$ of positive elements,
\Eq{*}{
\sum_{n=1}^\infty \M(x_1,\dots,x_n) \le \Hc(\M) \cdot \sum_{n=1}^\infty x_n.
}
Means with a finite Hardy constant are called \emph{Hardy means} (cf.\ \cite{PalPas16}).

\subsection{Homogeneous quasideviation means}
Let $\HQD$ be a class of all continuous functions $f\colon\R_+\to\R$ such that 
\begin{enumerate}[(i)]
 \item $\sign(f(t))=\sign(t-1)$ for all $t\in\R_+$, 
 \item for all $x\in (0,1)$, the mapping
$t \mapsto \frac{f(t)}{f(t/x)}$ is strictly increasing on $(x,1)$. 
\end{enumerate}

\begin{lem} \label{L:HQD} Let $f\colon\R_+\to\R$ satisfy (i) of the previous definition and assume that, for some $p\in\R$, the function $f_p(t):=t^pf(t)$ ($t\in\R_+$) is increasing on $\R_+$ and strictly increasing on $(0,1)$. Then $f$ belongs to $\HQD$. In particular, if $f$ is increasing on $\R_+$ and strictly increasing on $(0,1)$, then $f\in\HQD$.
\end{lem}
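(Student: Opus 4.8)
The plan is to reduce the general case to the ``in particular'' case ($p=0$) by a rescaling identity, and then to verify condition (ii) directly through a careful sign analysis.

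First I would observe that, since $t^p>0$ for $t\in\R_+$, the function $g:=f_p$ again satisfies (i): indeed $\sign(g(t))=\sign(t^p)\,\sign(f(t))=\sign(t-1)$. By hypothesis $g$ is increasing on $\R_+$ and strictly increasing on $(0,1)$. For any fixed $x\in(0,1)$ and $t\in(x,1)$ I would record the identity
\[
\frac{f(t)}{f(t/x)}=\frac{t^{-p}g(t)}{(t/x)^{-p}g(t/x)}=x^{-p}\,\frac{g(t)}{g(t/x)},
\]
in which $x^{-p}$ is a fixed positive constant. Hence $t\mapsto f(t)/f(t/x)$ is strictly increasing on $(x,1)$ if and only if $t\mapsto g(t)/g(t/x)$ is, which reduces the statement to proving the ``in particular'' assertion for $g$. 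Taking $p=0$ shows conversely that the general claim is no stronger than this core case, so it suffices to treat it.

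To prove the core claim, fix $x\in(0,1)$ and note that for $t\in(x,1)$ one has $t<1$ and $t/x>1$, so by (i) the numerator $g(t)$ is negative while the denominator $g(t/x)$ is positive; in particular the ratio is well defined (the denominator never vanishes) and is negative. Take $t_1<t_2$ in $(x,1)$. Since the denominators are positive, the desired inequality $g(t_1)/g(t_1/x)<g(t_2)/g(t_2/x)$ is equivalent, after cross-multiplication, to
\[
\abs{g(t_1)}\,g(t_2/x)>\abs{g(t_2)}\,g(t_1/x),
\]
all four factors being positive. This I would obtain from two monotonicity facts: because $g$ is strictly increasing and negative on $(0,1)$, the function $\abs{g}$ is strictly decreasing there, giving $\abs{g(t_1)}>\abs{g(t_2)}>0$; and because $g$ is increasing on $\R_+$ with $t_1/x<t_2/x$, we get $0<g(t_1/x)\le g(t_2/x)$. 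Multiplying the strict inequality $\abs{g(t_1)}>\abs{g(t_2)}$ by $g(t_2/x)>0$, multiplying $g(t_2/x)\ge g(t_1/x)$ by $\abs{g(t_2)}>0$, and chaining the two yields the claim.

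The only delicate point is the sign bookkeeping: the ratio in (ii) is negative on $(x,1)$, so cross-multiplication reverses the naive reading of the inequality, and one must check that the strictness originates precisely from the strict monotonicity of $f$ (equivalently $g$) on $(0,1)$, whereas on $(1,\infty)$ only the available weak monotonicity is used. Once this is arranged no further estimates are needed, and the rescaling identity transfers the conclusion back to $f$, so that $f\in\HQD$.
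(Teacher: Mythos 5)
Your proof is correct and follows essentially the same route as the paper: both rest on the identity $\frac{f(t)}{f(t/x)}=x^{-p}\frac{f_p(t)}{f_p(t/x)}$ together with the sign condition (numerator negative, denominator positive on $(x,1)$) and the two monotonicity facts for $f_p$. The only difference is presentational — the paper phrases the final step as ``a product of a positive strictly decreasing function and a positive decreasing function is strictly decreasing,'' while you unpack exactly that via cross-multiplication at two points $t_1<t_2$.
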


\begin{proof} We need to verify that condition (ii) is also valid for $f$. Let $x\in(0,1)$ be fixed. Then 
\Eq{*}{
  \frac{f(t)}{f(t/x)}
  =x^p\frac{t^pf(t)}{(t/x)^pf(t/x)}
  =x^p\frac{f_p(t)}{f_p(t/x)}
  =-x^p\frac{(-f_p)(t)}{f_p(t/x)}
}
and, using the monotonicity property of $f_p$, observe that the mappings
\Eq{*}{
 t\mapsto (-f_p)(t) \qquad\mbox{and}\qquad
 t\mapsto \frac{1}{f_p(t/x)}
}
are positive and strictly decreasing/decreasing functions on $(x,1)$. Therefore, their product is strictly decreasing, which implies that $t \mapsto \frac{f(t)}{f(t/x)}$ is strictly increasing on $(x,1)$ and proves that (ii) is valid.
\end{proof}

\begin{rem}
In view of the results os the paper \cite[Theorem 6]{Pal88b}, it turns out that a continuous function $f\colon\R_+\to\R$ belongs to $\HQD$ if and only if, for some $p\in\R$, the function $f_p(t):=t^pf(t)$ ($t\in\R_+$) is increasing and strictly increasing either on $(0,1)$ or on $(1,\infty)$.
\end{rem}

For every $f \in \HQD$ and all positive-entry vector $x:=(x_1,\dots,x_n)$, define $e_{f,x}(y) \colon \R_+ \to \R$ by
\Eq{*}{
e_{f,x}(y):=f\Big(\frac{x_1}y\Big)+\dots+f\Big(\frac{x_n}y\Big).
}
Due to \cite{Pal88a}, it is known that the equation $e_{f,x}(y)=0$ has a unique solution, which is called the \emph{quasideviation mean generated by the quasideviation $E_f(x,y):=f(x/y)$}. We denote it by $y:=\E_f(x_1,\dots,x_n)$. It immediately follows from this definition that $\E_f$ is a homogeneous mean, i.e., $\E_f(tx_1,\dots,tx_n)=t\E_f(x_1,\dots,x_n)$ for all $n\in\N$ and $t,x_1,\dots,x_n>0$. Let us recall two important results for this family of means.

\begin{lem}\label{lem:comphqd}
For all $f,g \in \HQD$ with $f \le g$ we have $\E_f \le \E_g$.
\end{lem}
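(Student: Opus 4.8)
The plan is to fix an arbitrary positive-entry vector $x=(x_1,\dots,x_n)$ and to compare the two auxiliary functions $e_{f,x}$ and $e_{g,x}$ whose unique zeros define $\E_f(x)$ and $\E_g(x)$, respectively. Since $f\le g$ pointwise, summing the inequalities $f(x_i/y)\le g(x_i/y)$ over $i$ gives $e_{f,x}(y)\le e_{g,x}(y)$ for every $y\in\R_+$. The idea is then to locate $\E_f(x)$ relative to the sign pattern of $e_{g,x}$, using that the defining equation pins down $\E_f(x)$ as a zero.

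First I would record the sign behaviour of $e_{h,x}$ (for $h\in\{f,g\}$) forced by condition (i). For $0<y<\min(x)$ every ratio $x_i/y$ exceeds $1$, so $e_{h,x}(y)>0$, while for $y>\max(x)$ every ratio lies below $1$, so $e_{h,x}(y)<0$. Because $h$ is continuous, $e_{h,x}$ is continuous in $y$, and by the result recalled above it has exactly one zero, namely $\E_h(x)\in[\min(x),\max(x)]$. An intermediate value argument then upgrades this to the full sign-change pattern: if $e_{h,x}$ were nonpositive at some point of $(0,\E_h(x))$, then, together with its positivity near $0$, it would produce a second zero, contradicting uniqueness; hence $e_{h,x}>0$ on $(0,\E_h(x))$, and symmetrically $e_{h,x}<0$ on $(\E_h(x),\infty)$.

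With this in hand the conclusion is immediate. Writing $m:=\E_f(x)$, the defining property gives $e_{f,x}(m)=0$, so the termwise comparison yields $e_{g,x}(m)\ge e_{f,x}(m)=0$. If we had $m>\E_g(x)$, the sign pattern for $g$ would force $e_{g,x}(m)<0$, a contradiction; therefore $m\le\E_g(x)$, that is, $\E_f(x)\le\E_g(x)$. As $x$ was arbitrary, this proves $\E_f\le\E_g$. The only genuine content lies in the sign-change pattern of the second step; everything else is bookkeeping. I expect that to be the point to get right, since it is exactly what converts the pointwise inequality $f\le g$ (which controls the \emph{values} of $e_{g,x}$) into a statement about the \emph{location} of its zero $\E_g(x)$ — and it rests on the uniqueness of that zero recalled above, rather than on any monotonicity of $e_{g,x}$ in $y$, which the quasideviation framework does not provide.
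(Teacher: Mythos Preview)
Your argument is correct and follows essentially the same approach as the paper: both establish, via continuity together with uniqueness of the zero, that $e_{f,x}$ is strictly positive to the left of $\E_f(x)$, and then use the termwise inequality $e_{f,x}\le e_{g,x}$ to force $\E_g(x)\ge\E_f(x)$. Your write-up is slightly more detailed (you record the full sign pattern on both sides of the zero and phrase the conclusion as a contradiction from $e_{g,x}(m)<0$), but the underlying idea is identical.
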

\begin{proof}
 Fix a vector of positive reals $x:=(x_1,\dots,x_n)$. Then $e_{f,x}(y)\ne0$ for all $y<\E_f(x)$. Furthermore $e_f(\min(x))>0$. Thus, since $e_{f,x}$ is continuous, we get $e_{f,x}(y)>0$ for all $y<\E_f(x)$. 
 Consequently we get $e_{g,x}(y) \ge e_{f,x}(y)>0$ for all $y<\E_f(x)$, which implies $\E_f(x)\le \E_g(x)$.
\end{proof}

\begin{rem} By the results of \cite{DarPal82} and \cite[Theorem 10]{Pal88a}, the comparison inequality $\E_f \le \E_g$ holds if and only if $f\leq ag$ for some positive number $a$.
\end{rem}

\begin{prop}[\cite{PalPas18a}, Theorem~3.4]\label{prop:concqdH}
Let $f\colon\R_+\to\R$ be concave such that $\sign(f(t))=\sign(t-1)$ for all $t\in\R_+$. 
% For a vector $(x_1,\dots,x_n)$ having all positive entries we define the homogeneous deviation mean by $y:=\E_f(x_1,\dots,x_n)$ as the unique solution $y$ of the equation
% \Eq{*}{
% f\Big(\frac{x_1}y\Big)+\dots+f\Big(\frac{x_n}y\Big)=0.
% }
% Then it is known (cf....) that $\E_f$ is concave if and only if $f$ is concave.
Then $f$ is increasing on $\R_+$ and strictly increasing on $(0,1)$ (and hence $f \in \HQD$), and $\E_f$ is a Hardy mean if and only if 
\Eq{HP}{
\int_0^1 f\Big(\frac1t\Big)\:dt<+\infty.
}
Furthermore, its Hardy constant $c:=\Hc(\E_f)$ is the unique solution of the equation 
\Eq{*}{
\int_0^c f\Big(\frac1t\Big)\:dt=0.
}
\end{prop}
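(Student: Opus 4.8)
The statement splits into the monotonicity assertion and the evaluation of the Hardy constant, and I would treat them separately. For the first part, note that the sign condition forces $f(1)=0$, with $f<0$ on $(0,1)$ and $f>0$ on $(1,\infty)$, while concavity makes the right derivative $f'_+$ exist and be nonincreasing on $\R_+$. The plan is to show $f'_+\ge0$ everywhere: if $f'_+(t_1)<0$ at some $t_1>1$, then $f(t)\le f(t_1)+f'_+(t_1)(t-t_1)\to-\infty$ as $t\to\infty$, contradicting $f>0$ on $(1,\infty)$; hence $f'_+\ge0$ for $t>1$ and, $f'_+$ being nonincreasing, $f'_+\ge0$ throughout, so $f$ is increasing. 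Strictness on $(0,1)$ follows the same way: a flat piece inside $(0,1)$ would force $f'_+\equiv0$ to its right, hence $f$ constant and negative on a neighbourhood of $+\infty$, again impossible. Then $f\in\HQD$ by the second assertion of Lemma~\ref{L:HQD}.

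Next I would set $F(c):=\int_0^c f(1/t)\,dt$ and observe that, since $f(1/t)>0$ for $t\in(0,1)$ and $f(1/t)<0$ for $t>1$, the function $F$ increases strictly on $(0,1)$ and decreases strictly on $(1,\infty)$, with $F(c)\to-\infty$ because $f(1/t)\to f(0^+)<0$. Condition \eq{HP} is precisely the finiteness of $F(1)$, and when it holds $F$ has a unique positive zero $c>1$, so the equation defining the Hardy constant is well posed exactly under \eq{HP}. When \eq{HP} fails I would show that the inequality cannot hold with any finite constant, so that the ``if and only if'' is accounted for by the same family of test sequences used below.

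For the lower bound $\Hc(\E_f)\ge c$ the plan is to test on the power sequences $x_k:=k^{-s}$ with $s>1$ and let $s\to1^+$. Writing $y_n:=\E_f(x_1,\dots,x_n)$ and guessing the self-similar asymptotics $y_n\sim\gamma\,x_n$, the defining relation $\sum_{k=1}^n f(x_k/y_n)=0$, after the substitution $k=n\theta$ and a Riemann-sum passage, becomes the first equation below, which the change of variables $t=\gamma\theta^s$ converts into the second:
\Eq{*}{
\int_0^1 f\bigl(\gamma^{-1}\theta^{-s}\bigr)\,d\theta=0
\qquad\text{and}\qquad
\int_0^{\gamma} f\Bigl(\frac1t\Bigr)\,t^{\frac1s-1}\,dt=0.
}
As $s\to1^+$ the factor $t^{1/s-1}\to1$ and the root $\gamma=\gamma(s)$ tends to the zero $c$ of $F$. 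Since $\sum_k x_k=\zeta(s)\to\infty$, the finite corrections to $y_n\sim\gamma x_n$ wash out in the quotient, and $\bigl(\sum_n y_n\bigr)/\bigl(\sum_k x_k\bigr)\to c$; the same computation gives divergence of the ratio when \eq{HP} fails.

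It remains to prove the matching upper bound $\sum_n\E_f(x_1,\dots,x_n)\le c\sum_k x_k$ for every sequence, and this is where concavity of $f$ is indispensable and where I expect the real difficulty to lie. The plan is to transfer concavity from the generator to the mean: a concave $f$ produces a Jensen-concave, homogeneous, symmetric, repetition-invariant and hence monotone mean $\E_f$, which therefore enjoys the Kedlaya mixing property; from that property the Hardy inequality follows, and the resulting optimal constant is pinned down by exactly the one-dimensional extremal problem solved in the previous step, so that it coincides with $c$. The hard part will be carrying out this upper estimate with no loss in the constant --- obtaining precisely $c$ rather than a larger number --- and, on the lower-bound side, rigorously justifying the discrete-to-continuous limit; these two points are the crux of the argument, the monotonicity and solvability parts being essentially routine.
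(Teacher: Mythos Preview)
The paper does not contain its own proof of this proposition: it is quoted verbatim as Theorem~3.4 of \cite{PalPas18a} and used as a black box. So there is nothing to compare your argument against in this manuscript; your sketch is an attempt to reprove a cited result rather than to fill in a gap the authors left.

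That said, your outline tracks the strategy of \cite{PalPas18a} and the surrounding papers quite faithfully. The monotonicity argument via the nonincreasing one-sided derivative is standard and correct, including the observation that $f(0^+)<0$ (indeed, concavity together with $f(1)=0$ rules out $f(0^+)=0$). The analysis of $F(c)=\int_0^c f(1/t)\,dt$ is accurate and gives the uniqueness of the root. For the upper bound you correctly identify the mechanism: concavity of $f$ makes $\E_f$ Jensen-concave, homogeneous, monotone and repetition invariant, and then the Kedlaya-type machinery of \cite{PalPas16,PalPas18a} yields the Hardy inequality with the exact constant $c$; this is precisely how the cited paper proceeds, and it is indeed where the real work lies. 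Your lower-bound computation with $x_k=k^{-s}$ is the right test family; in \cite{PalPas16} this is done with the harmonic-type sequence and a general limit formula for homogeneous means, which bypasses the Riemann-sum heuristic you describe but leads to the same conclusion. The two points you flag as delicate --- no loss of constant in the Kedlaya step and the discrete-to-continuous passage --- are exactly the places where the cited papers do nontrivial work, so your self-assessment is on target.
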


\subsection{Gini means} For a given $p,q \in \R$, define $g_{p,q} \colon \R_+ \to \R$ by 
\Eq{*}{
g_{p,q}(t):=\begin{cases} 
\dfrac{t^p-t^q}{p-q} & \text{ if }p \ne q;\\
t^p\ln t & \text{ if }p = q.
\end{cases}
}
Then, observe that $t\mapsto t^{-q}g_{p,q}(t)$ is strictly increasing. Therefore, by Lemma~\ref{L:HQD}, it follows that $g_{p,q} \in \HQD$ for all $p,q \in \R$. Thus, for $p,q \in \R$, we define the \emph{Gini mean} of parameter $(p,q)$ by $\G_{p,q}:=\E_{g_{p,q}}$ (cf.\ \cite{Gin38}). One can easily see that $\G_{p,q}$ has the following explicit form:
\Eq{GM}{
  \G_{p,q}(x_1,\dots,x_n)
   :=\begin{cases}
    \left(\dfrac{x_1^p+\cdots+x_n^p}
           {x_1^q+\cdots+x_n^q}\right)^{\frac{1}{p-q}} 
      &\mbox{if }p\neq q, \\[4mm]
     \exp\left(\dfrac{x_1^p\ln(x_1)+\cdots+x_n^p\ln(x_n)}
           {x_1^p+\cdots+x_n^p}\right) \quad
      &\mbox{if }p=q.
    \end{cases}.
}
Clearly, in the particular case $q=0$, the mean $\G_{p,q}$ reduces to the $p$th H\"older mean $\P_p$. It is 
also obvious that $\G_{p,q}=\G_{q,p}$. Furthermore by \cite{DarLos70} we know that Gini means are nondecreasing in their parameters, that is $\G_{p,q}\le \G_{p',q'}$ for all $p,q,p',q' \in \R$ with $p\le p'$ and $q \le q'$.

Due to \cite{PalPer04} and \cite{Pas15c} it is known that $\G_{p,q}$ is a Hardy mean if and only if $\min(p,q)\le 0$ and $\max(p,q)<1$. In addition, its Hardy constant is related to the limit 
\Eq{*}{
H_{p,q}:=\lim_{n\to \infty} n \G_{p,q}\big(1,\tfrac12,\dots,\tfrac1n\big)=\begin{cases} 
               \left(\dfrac{1-p}{1-q}\right)^{\frac1{q-p}} & \mbox{if }p \ne q, \\
               e^{\frac1{1-p}} & \mbox{if }p=q=0.
              \end{cases}
}
More precisely, by \cite{PalPas16}, we have that $\Hc(\G_{p,q})=H_{p,q}$ in the case when the Gini mean $\G_{p,q}$ is concave, i.e., when $\min(p,q) \le 0 \le \max(p,q)<1$. In the remaining case, that is, if $\max(p,q)<0$, we only have the lower estimate $H_{p,q}\leq \Hc(\G_{p,q})$. However the exact value of $\Hc(\G_{p,q})$ remains unknown in this quadrant. 

There were two approaches to obtain upper estimations of this value. The comparison criterion easily implies that, for $q<p<0$, we have $\Hc(\G_{p,q}) \le \Hc(\G_{q,0}) = (1-q)^{- \frac1q}$. A sharper upper estimation is due to \cite{Pas2003b}, where it was proved that
\Eq{E:PasBound}{
\Hc(\G_{p,q})\le \begin{cases} \dfrac{\big(\frac{1-q}{1-p}\big)^{\frac{1-p}{p-q}}-p}{1-p} &\text{ for }q<p<0;\\
\frac{e-p}{1-p} &\text{ for }q=p<0.
           \end{cases}
}
The main goal of this paper is to improve both of these upper bounds.

\section{Main results}

Let us first prove a technical result collecting a few properties of the function $g_{p,q}$.

\begin{lem} \label{lem:taupq}
 Let $p,q \in (-\infty,0)$. Then 
 \begin{enumerate}[{\rm(i)}]
  \item $g_{p,q}$ has a global maximum at $\tau_{p,q}\in(1,\infty)$, where
  \Eq{*}{
  \tau_{p,q}:=\begin{cases}\big(\tfrac pq\big)^{\tfrac1{q-p}} & \text{ if }p \ne q, \\[2mm] e^{-1/p}&\text{ if }p=q; \end{cases}
}
  \item $\sign(g_{p,q}(t))=\sign(t-1)$ for all $t\in\R_+$;
\item $g_{p,q}$ restricted to the interval $(0,\tau_{p,q})$ is strictly increasing and concave.
 \end{enumerate}
\end{lem}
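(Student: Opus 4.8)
The plan is to derive (i) and (iii) from a direct analysis of the first and second derivatives of $g_{p,q}$, after disposing of the sign statement (ii). Since $g_{p,q}=g_{q,p}$ and the expression for $\tau_{p,q}$ is symmetric in $p,q$, I may assume without loss of generality that $q\le p<0$ and treat the cases $q<p$ and $q=p$ in parallel. For (ii), if $p>q$ then $p-q>0$ and $t^p-t^q$ has the sign of $t-1$ because $s\mapsto t^s$ is increasing exactly when $t>1$; if $p=q$ then $g_{p,p}(t)=t^p\ln t$ with $t^p>0$, so the sign is that of $\ln t$. (This is also just condition (i) in the definition of $\HQD$, already recorded for $g_{p,q}$.)

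For (i) I compute, for $p\ne q$,
\[
  g_{p,q}'(t)=\frac{pt^{p-1}-qt^{q-1}}{p-q}
  =\frac{t^{q-1}}{p-q}\bigl(pt^{p-q}-q\bigr),
\]
and $g_{p,p}'(t)=t^{p-1}(p\ln t+1)$ when $p=q$. In each case the bracketed factor is strictly monotone in $t$ (using $p<0$), positive near $0$ and negative near $+\infty$, so $g_{p,q}'$ has a unique zero where it changes sign from $+$ to $-$; solving the corresponding equation returns exactly $t=\tau_{p,q}$ and identifies it as the global maximum. That $\tau_{p,q}>1$ follows from sign bookkeeping: for $p>q$ one has $p/q\in(0,1)$ and $q-p<0$, so $(p/q)^{1/(q-p)}>1$, while $e^{-1/p}>1$ since $-1/p>0$.

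For (iii), strict monotonicity on $(0,\tau_{p,q})$ is immediate from $g_{p,q}'>0$ there. For concavity I use
\[
  g_{p,q}''(t)=\frac{t^{q-2}}{p-q}\bigl(p(p-1)t^{p-q}-q(q-1)\bigr),
  \qquad
  g_{p,p}''(t)=t^{p-2}\bigl(p(p-1)\ln t+2p-1\bigr).
\]
Since $p<0$ forces $p(p-1)>0$ and $q(q-1)>0$, the bracket is strictly increasing in $t$ (resp.\ in $\ln t$) and negative near $0$, so $g_{p,q}''$ has a unique zero $t^{*}$ with $g_{p,q}''<0$ on $(0,t^{*})$. It therefore suffices to show $t^{*}>\tau_{p,q}$. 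Comparing the defining relations $(t^{*})^{p-q}=\tfrac{q(q-1)}{p(p-1)}$ and $\tau_{p,q}^{\,p-q}=\tfrac{q}{p}$, and using that $t\mapsto t^{p-q}$ is increasing, this reduces to $\tfrac{q-1}{p-1}>1$, which holds because $q<p<1$ (dividing by the negative $p-1$ flips the inequality to $q<p$). The case $p=q$ reduces analogously to $1-2p>1-p$, i.e.\ $-p>0$. Hence $g_{p,q}''<0$ on all of $(0,\tau_{p,q})$, giving strict concavity.

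The only genuinely delicate point is the concavity claim: one must locate the inflection point $t^{*}$ and verify it lies beyond the maximizer $\tau_{p,q}$. As indicated above this collapses to a one-line elementary inequality, so no real obstacle remains; the main care needed is simply to carry the logarithmic $p=q$ case alongside the generic one.
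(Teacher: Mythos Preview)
Your argument is correct and follows essentially the same route as the paper's proof: compute $g_{p,q}'$ and $g_{p,q}''$, locate their unique zeros, and verify that the inflection point lies strictly to the right of the maximizer $\tau_{p,q}$ via the elementary inequality $\tfrac{q-1}{p-1}>1$ (resp.\ $-p>0$). The only differences are cosmetic: the paper treats just the case $p>q$ and declares $p=q$ and $p<q$ ``analogous'', whereas you carry the logarithmic $p=q$ case along explicitly; and the paper deduces (ii) a posteriori from the monotonicity and $g(1)=0$, while you give the direct one-line argument.
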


\begin{proof}
We prove the assertions only in the case $p > q$. The cases $p=q$ and $p<q$ are fairly analogous. For the brevity, denote $g:=g_{p,q}$ and $\tau:=\tau_{p,q}$. We have
\Eq{*}{
g'(t)=\frac{pt^{p-1}-qt^{q-1}}{p-q} \qquad\text{and}\qquad g''(t)=\frac{p(p-1)t^{p-2}-q(q-1)t^{q-2}}{p-q}.
}
Thus $g'$ has a unique zero which equals $\tau$. The second derivative of $g$ also has a unique zero which equals
$\eta:=\big(\frac{p(p-1)}{q(q-1)}\big)^{\frac1{q-p}}=\tau \big(\frac{p-1}{q-1}\big)^{\frac{1}{q-p}}$. But $q-1<p-1<0$, thus $\frac{p-1}{q-1}>1$, and $q-p>0$ . Therefore $\eta>\tau>1$.

Next, observe that $g(0^+)=-\infty$ and $g(+\infty)=0$. Thus $g$ is strictly increasing on $(0,\tau)$ and strictly decreasing of $(\tau,\infty)$, which implies (i). In particular $g''(\tau)<0$. Consequently $g''$ is negative on the interval $(0,\eta)$. Thus, since $(0,\tau)\subset (0,\eta)$, we obtain (iii).

As we have already describe the monotonicity properties of $g$, to show the property (ii) it is sufficient to verify that $g(1)=0$, which is trivial.
\end{proof}
Now we are ready to prove the crucial lemma, which binds all facts which will be used in the proof of the main theorem.
\begin{lem}\label{lem:fpq}
 Let $p,q \in (-\infty,0)$. Define $f_{p,q} \colon (0,+\infty) \to \R$ by
\Eq{*}{
f_{p,q}(t):=\begin{cases}
          g_{p,q}(t) & \text{ for } t\le \tau_{p,q}; \\
          g_{p,q}(\tau_{p,q}) & \text{ for }t > \tau_{p,q}.
         \end{cases}
}Then 
\begin{enumerate}[{\rm(I)}]
\item $f_{p,q}$ is concave;
\item $\sign(f_{p,q}(t))=\sign(t-1)$ for all $x\in\R_+$;
\item \label{it3} $\Hc(\E_{f_{p,q}})=c_{p,q}$, where $c_{p,q} \in (1,+\infty)$ is the unique  solution of the equation
\Eq{E:solc}{
\frac{g_{p,q}(\tau_{p,q})}{\tau_{p,q}} + \int_{\frac{1}{\tau_{p,q}}}^{c_{p,q}} g_{p,q} \big(\tfrac1t\big)dt=0
}
\item $g_{p,q} \le f_{p,q}$;
\item \label{it5} $\G_{p,q} \le \E_{f_{p,q}}$.
\end{enumerate}
\end{lem}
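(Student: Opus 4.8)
The plan is to verify the five listed properties of $f_{p,q}$ in order, exploiting the structure of $g_{p,q}$ established in Lemma~\ref{lem:taupq}, and treating item~(\ref{it5}) as the payoff that follows once the earlier items are in hand. The function $f_{p,q}$ is by construction the ``truncation at the maximum'': it agrees with $g_{p,q}$ on $(0,\tau_{p,q}]$ and is constant equal to the maximal value $g_{p,q}(\tau_{p,q})$ afterwards.

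First I would dispatch (I) and (II), which are essentially immediate from Lemma~\ref{lem:taupq}. For concavity, Lemma~\ref{lem:taupq}(iii) gives that $g_{p,q}$ is concave on $(0,\tau_{p,q})$; on $(\tau_{p,q},\infty)$ the function $f_{p,q}$ is constant, hence concave; and at the junction point $\tau_{p,q}$ the two pieces match in value and the left derivative $g_{p,q}'(\tau_{p,q})=0$ (since $\tau_{p,q}$ is the critical point) matches the right derivative $0$, so no concavity is lost at the gluing. Thus $f_{p,q}$ is globally concave. For the sign condition (II), note $f_{p,q}=g_{p,q}$ on $(0,\tau_{p,q}]$ so the sign is correct there by Lemma~\ref{lem:taupq}(ii), and on $(\tau_{p,q},\infty)$ we have $f_{p,q}\equiv g_{p,q}(\tau_{p,q})>0$ while $t>\tau_{p,q}>1$, so $\sign(f_{p,q}(t))=\sign(t-1)$ persists. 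Item~(IV) is equally direct: for $t\le\tau_{p,q}$ equality holds, and for $t>\tau_{p,q}$ we have $f_{p,q}(t)=g_{p,q}(\tau_{p,q})=\max g_{p,q}\ge g_{p,q}(t)$ since $\tau_{p,q}$ is the global maximizer.

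Next I would establish (III) by invoking Proposition~\ref{prop:concqdH}. Items (I) and (II) show that $f_{p,q}$ is a concave function with the required sign, so the proposition applies and tells us that $f_{p,q}\in\HQD$, that $\E_{f_{p,q}}$ is a Hardy mean provided the integrability condition \eq{HP} holds, and that the Hardy constant is the unique positive root $c$ of $\int_0^c f_{p,q}(1/t)\,dt=0$. The task is then to rewrite this defining equation in the form \eq{E:solc}. I would split the integral at the value where $1/t$ crosses $\tau_{p,q}$, i.e.\ at $t=1/\tau_{p,q}$: on $(0,1/\tau_{p,q})$ the argument $1/t>\tau_{p,q}$, so $f_{p,q}(1/t)=g_{p,q}(\tau_{p,q})$ is constant and contributes $\frac{g_{p,q}(\tau_{p,q})}{\tau_{p,q}}$; on $(1/\tau_{p,q},c)$ we have $f_{p,q}(1/t)=g_{p,q}(1/t)$. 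This produces exactly the left-hand side of \eq{E:solc}. The finiteness of the integral, and hence the Hardy property, should follow from $g_{p,q}(1/t)\to 0$ as $t\to 0^+$ (for $p,q<0$ the power $t^{-p},t^{-q}\to 0$), ensuring \eq{HP}; I would also check $c_{p,q}>1$, which holds because the integrand is negative on a neighborhood of $t=1$ and the constant first term is positive.

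Finally, item~(\ref{it5}) is the synthesis and, in a formal sense, the ``main obstacle'' is ensuring the comparison machinery applies cleanly: since $g_{p,q}\in\HQD$ (noted in the Gini subsection) and $f_{p,q}\in\HQD$ (from (III)) with $g_{p,q}\le f_{p,q}$ by (IV), Lemma~\ref{lem:comphqd} yields $\E_{g_{p,q}}\le\E_{f_{p,q}}$, and the left side is precisely $\G_{p,q}$ by definition. The only subtlety worth flagging is that the concavity at the junction in (I) relies on the matching of one-sided derivatives at $\tau_{p,q}$; verifying $g_{p,q}'(\tau_{p,q})=0$ is exactly the content that $\tau_{p,q}$ is the global maximizer, already proved in Lemma~\ref{lem:taupq}(i), so no genuine difficulty arises there. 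The remainder is routine bookkeeping with the piecewise definition.
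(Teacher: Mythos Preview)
Your proposal is correct and follows essentially the same route as the paper: Lemma~\ref{lem:taupq} handles (I), (II), (IV); Proposition~\ref{prop:concqdH} with the integral split at $1/\tau_{p,q}$ yields (III); and Lemma~\ref{lem:comphqd} applied to (IV) gives (V). One small slip: for \eq{HP} you should argue boundedness of $f_{p,q}(1/t)$ near $t=0^+$ (it is constant there), not of $g_{p,q}(1/t)$, though the conclusion is unaffected.
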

\begin{proof}
To show (I) we need to verify that $f_{p,q}'$ is decreasing, which easily follows from the definition of $f_{p,q}$, and the Lemma~\ref{lem:taupq}.(iii).
Next, Lemma~\ref{lem:taupq}.(i) yields (IV). Consequently, view of Lemma~\ref{lem:comphqd}, we get (V). Property (II) is implied Lemma~\ref{lem:taupq}.(i) and (ii). 

To proceed to the proof of (III), set $c:=c_{p,q}$, $f:=f_{p,q}$, $g:=g_{p,q}$, and $\tau:=\tau_{p,q}$. It is easy to check that \eq{HP} holds. Thus, by Proposition~\ref{prop:concqdH} we know that $\E_f$ is a Hardy mean and its Hardy constant $c$ satisfies $\tau^{-1}<1<c$ and solves the equation
\Eq{*}{
0&=\int_0^c f(\tfrac{1}{t})\:dt=\int_0^{\frac{1}{\tau}} f(\tfrac{1}{t})\:dt+\int_{\frac{1}{\tau}}^c f(\tfrac{1}{t})\:dt\\
&=\int_0^{\frac1\tau} g(\tau)\:dt+\int_{\frac{1}{\tau}}^c g(\tfrac{1}{t})\:dt
=\frac{g(\tau)}{\tau}+\int_{\frac{1}{\tau}}^c g(\tfrac{1}{t})\:dt,
}
which completes the proof
\end{proof}

Now we are ready to present the main result of this note.

\begin{thm}\label{thm:main}
 If $p,q \in (-\infty,0)$, then $\Hc(\G_{p,q})\le c_{p,q}$, where $c_{p,q}$ is the unique solution $c\in (1,+\infty)$ of the equation 
 \Eq{name}{
\frac{c^{1-q}}{1-q}-\frac{c^{1-p}}{1-p}&=\abs{q}^{\frac{1-p}{q-p}}\abs{p}^{\frac{1-q}{p-q}}\Big(\frac{1}{1-p}-\frac{1}{1-q}\Big) \qquad&\text{ if }p \ne q;\\
c^{1-p}(1-\ln c^{1-p})&=pe^{\frac{1-p}p} \qquad&\text{ if }p=q.
}
\end{thm}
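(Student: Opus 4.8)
The plan is to read off the upper bound directly from Lemma~\ref{lem:fpq} and then to convert the implicit characterization \eq{E:solc} of $c_{p,q}$ into the explicit equation \eq{name}.

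First I would establish the bound itself, which is immediate. By Lemma~\ref{lem:fpq}.(V) we have $\G_{p,q}\le\E_{f_{p,q}}$, and the Hardy constant is monotone with respect to the pointwise ordering of means: if $\M\le\M'$, then $\sum_{n=1}^\infty\M(x_1,\dots,x_n)\le\sum_{n=1}^\infty\M'(x_1,\dots,x_n)$ for every sequence, so $\Hc(\M)\le\Hc(\M')$ straight from the definition. Combining this with Lemma~\ref{lem:fpq}.(III) gives
\Eq{*}{
\Hc(\G_{p,q})\le\Hc(\E_{f_{p,q}})=c_{p,q}.
}
It then remains to show that the number $c_{p,q}$, characterized in \eq{E:solc} as the unique solution in $(1,+\infty)$, satisfies \eq{name}.

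For the transformation in the case $p\ne q$, write $g:=g_{p,q}$ and $\tau:=\tau_{p,q}$. I would substitute $g(1/t)=\frac{t^{-p}-t^{-q}}{p-q}$ into \eq{E:solc} and integrate term by term, using the antiderivatives $\frac{t^{1-p}}{1-p}$ and $\frac{t^{1-q}}{1-q}$ (legitimate since $1-p,1-q>0$), together with the boundary value $\frac{g(\tau)}{\tau}=\frac{\tau^{p-1}-\tau^{q-1}}{p-q}$. After multiplying through by $p-q$ and collecting the terms that depend on $\tau$, the coefficients $1-\frac{1}{1-p}=\frac{-p}{1-p}$ and $1-\frac{1}{1-q}=\frac{-q}{1-q}$ appear. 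The crucial step is then to invoke the critical-point identity $p\tau^{p-1}=q\tau^{q-1}$ (equivalently $\tau^{q-p}=p/q$) coming from the proof of Lemma~\ref{lem:taupq}: this lets me express both $\tau$-terms through the single quantity $K:=p\tau^{p-1}=q\tau^{q-1}$, so that the entire $\tau$-dependence collapses into $-K\big(\frac{1}{1-p}-\frac{1}{1-q}\big)$. Finally, writing $K=p(p/q)^{(p-1)/(q-p)}$ and using $p=-\abs{p}$, $q=-\abs{q}$, the exponent bookkeeping yields $-K=\abs{q}^{\frac{1-p}{q-p}}\abs{p}^{\frac{1-q}{p-q}}$, which is exactly the right-hand side of the first line of \eq{name}.

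The case $p=q$ follows the same scheme with $g(1/t)=-t^{-p}\ln t$ and $\tau=e^{-1/p}$; the only difference is that the integral is evaluated by parts, via the antiderivative $\frac{t^{1-p}}{1-p}\big(\ln t-\frac{1}{1-p}\big)$, and one uses the boundary value $g(\tau)=-\frac{1}{pe}$ to reach the second line of \eq{name}. Since every step of the transformation is a reversible algebraic manipulation, the two equations have the same root in $(1,+\infty)$, so the uniqueness claimed in the theorem is inherited from Lemma~\ref{lem:fpq}.(III). I expect the main obstacle to be purely the exponent algebra in the $p\ne q$ case: the whole computation hinges on the critical-point identity, which is precisely what forces the otherwise unwieldy $\tau^{p-1}$ and $\tau^{q-1}$ terms to cancel into the closed form on the right of \eq{name}.
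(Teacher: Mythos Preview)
Your proposal is correct and follows the same route as the paper: invoke Lemma~\ref{lem:fpq}\,(V) and (III) to obtain $\Hc(\G_{p,q})\le\Hc(\E_{f_{p,q}})=c_{p,q}$, then rewrite \eq{E:solc} by integrating $g_{p,q}(1/t)$ explicitly and simplifying the $\tau$-dependent terms. Your use of the critical-point identity $p\tau^{p-1}=q\tau^{q-1}$ to introduce the single quantity $K$ is a slightly cleaner way to organize the algebra than the paper's direct substitution of $\tau=(p/q)^{1/(q-p)}$, but the computations are the same in substance.
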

\begin{proof}
Let us denote, as previously, $g:=g_{p,q}$, $\tau:=\tau_{p,q}$, and $c:=c_{p,q}$.
Then applying Lemma~\ref{lem:fpq} \eqref{it5}, and \eqref{it3} we have $\Hc(\G_{p,q})\le\Hc(\E_{f_{p,q}})=c$, where $c$ solves \eq{E:solc}.
 Thus for $p\ne q$ we have
 \Eq{}{
0&=\frac{g(\tau)}{\tau} + \int_{\frac{1}{\tau}}^{c} g \big(\tfrac1t\big)dt
=\frac{g(\tau)}{\tau} +\int_{\frac{1}{\tau}}^{c} \frac{t^{-p}-t^{-q}}{p-q}\:dt\\
&=\frac{g(\tau)}{\tau} + \frac{1}{p-q}\cdot\Big( \frac{c^{1-p}-\tau^{p-1}}{1-p} - \frac{c^{1-q}-\tau^{q-1}}{1-q}\Big)\\
&=\frac{1}{p-q}\cdot\Big(\frac{c^{1-p}}{1-p}-\frac{c^{1-q}}{1-q}+\Big(\frac{(p-q)g(\tau)}{\tau}+\frac{\tau^{p-1}}{p-1} - \frac{\tau^{q-1}}{q-1}\Big)\Big).
 }
 Equivalently, we have
\Eq{1811-1}{
\frac{c^{1-q}}{1-q} -\frac{c^{1-p}}{1-p}=\frac{(p-q)g(\tau)}{\tau}+\frac{\tau^{p-1}}{p-1} - \frac{\tau^{q-1}}{q-1}.
}
In order to complete the proof, observe that
\Eq{*}{
\frac{(p-q)g(\tau)}{\tau}+\frac{\tau^{p-1}}{p-1} - \frac{\tau^{q-1}}{q-1}&=\tau^{p-1}-\tau^{q-1}+\frac{\tau^{p-1}}{p-1} - \frac{\tau^{q-1}}{q-1}=\frac{p\tau^{p-1}}{p-1}-\frac{q\tau^{q-1}}{q-1}\\
&=\frac{p}{p-1} \Big(\frac {\abs{p}}{\abs{q}}\Big)^{\tfrac{p-1}{q-p}}-\frac{q}{q-1} \Big(\frac{\abs{p}}{\abs{q}}\Big)^{\tfrac{q-1}{q-p}}\\
&=\frac{\abs{p}}{1-p} \abs{p}^{\tfrac{p-1}{q-p}}\abs{q}^{\tfrac{1-p}{q-p}}-\frac{\abs{q}}{1-q} \abs{p}^{\tfrac{q-1}{q-p}}\abs{q}^{\tfrac{1-q}{q-p}}\\
&=\abs{q}^{\frac{1-p}{q-p}}\abs{p}^{\frac{1-q}{p-q}}\Big(\frac{1}{1-p}-\frac{1}{1-q}\Big).
}
Therefore \eq{1811-1} implies our assertion in the case $p\ne q$.

Now assume that $p=q<0$. Then \eq{E:solc} simplifies to 
\Eq{*}{
0&=\frac{g(\tau)}{\tau} + \int_{\frac{1}{\tau}}^{c} g \big(\tfrac1t\big)dt
=\frac{g(\tau)}{\tau} -\int_{\frac{1}{\tau}}^{c} t^{-p}\ln t\:dt= \frac{g(\tau)}{\tau} +\Big[ \frac{t^{1-p}(1-\ln t^{1-p})}{(p-1)^2}\Big]_{\frac{1}\tau}^c\\
&=\frac{g(\tau)}{\tau} + \frac{c^{1-p}(1-\ln c^{1-p})}{(p-1)^2}-\frac{\tau^{p-1}(1-\ln \tau^{p-1})}{(p-1)^2}\\
&=\frac{1}{(p-1)^2}\Big(\frac{g(\tau)(p-1)^2}{\tau} + c^{1-p}(1-\ln c^{1-p})-\tau^{p-1}(1-\ln \tau^{p-1})\Big)
}
Thus one gets
\Eq{*}{
 c^{1-p}(1-\ln c^{1-p})=\tau^{p-1}(1-\ln \tau^{p-1})-\frac{g(\tau)(p-1)^2}{\tau}.
}
As in the previous case it is now sufficient to simplify 
\Eq{*}{
\tau^{p-1}(1-\ln \tau^{p-1})-\frac{g(\tau)(p-1)^2}{\tau}
&=\tau^{p-1}(1-\ln \tau^{p-1})-(p-1)\tau^{p-1}\ln\tau^{p-1}\\
&=\tau^{p-1}(1-p\ln\tau^{p-1})
=e^{\frac{1-p}p}(1-p\ln e^{\frac{1-p}p})
=pe^{\frac{1-p}p},
}
which completes the second case.
\end{proof}

We note that equations in \eq{name} are equivalent to \eq{E:solc}.

\section{An example}
In this section we compare the known estimates for the particular Gini mean $\G_{-1,-2}$.

--- First, by $H_{p,q}\le \Hc(\G_{p,q})$, we immediately obtain $1.5\leq\Hc(\G_{-1,-2})$. 

--- In view of the trivial estimation $\G_{-1,-2}\le \G_{0,-2}$, we get $\Hc(\G_{-1,-2}) \le \Hc(\G_{0,-2})=\sqrt{3} \approx 1.732$.

--- Using the inequality \eq{E:PasBound}, we have 
\Eq{*}{
\Hc(\G_{p,q})\le \dfrac{\big(\frac{3}{2}\big)^{2}+1}{2}=\frac{13}{8}=1.625.
}

--- Finally we use Theorem~\ref{thm:main}. Then $c_0$ is a solution of the equation
$\tfrac13c^3-\tfrac12c^2=2^{-2} (\tfrac12-\tfrac13)$ (or, after simplifications, $8c^3-12c^2-1=0$). This polynomial has a unique real root $c_0 \approx 1.552$. Thus $\Hc(\G_{-1,-2}) \le c_0$, which, obviously, improves both previous upper estimations. On the other hand, this is still far from the lower estimation $1.5$.

We now somehow leave apart the Hardy property and examine the mean $\E_{f_{-1,-2}}$, where $f_{-1,-2} \colon \R_+ \to \R$ is defined in Lemma~\ref{lem:fpq} by
\Eq{*}{
 f_{-1,-2}(t)=\begin{cases}
          t^{-1}-t^{-2} & \text{ for } t\le 2; \\
          \tfrac14 & \text{ for }t > 2.
         \end{cases}
}
By Lemma~\ref{lem:fpq}, we already know that $\Hc(\E_{f_{-1,-2}})=c_0$. We are going to show the form of this mean. Since it is symmetric, let $n \in \N$ and $x=(x_1,\dots,x_n)$ be a vector of positive numbers with $x_1 \le \ldots\le x_n$. Put $m:=\E_{f_{-1,-2}}(x)$. Take the maximal $k \in \{1,\dots,n\}$ such that $x_k \le 2m$. Then either $2m < x_{k+1}$ or $k=n$.  Thus
 \Eq{*}{
  0=e_g(m)=\sum_{i=1}^k f_{-1,-2}\Big(\frac{x_i}m\Big)+ \sum_{i=k+1}^n f_{-1,-2}\Big(\frac{x_i}m\Big)= \sum_{i=1}^k \Big(\frac{m}{x_i} -\frac{m^2}{x_i^2}\Big) + \frac{n-k}{4}.
  }
With the notation $s_\alpha:=x_1^\alpha+\dots+x_k^\alpha$, we have $-s_{-2}m^2+s_{-1}m+\tfrac{n-k}{4}=0$.
Therefore, since $m>0$, we get 
\Eq{*}{
m &=\frac{s_{-1} + \sqrt{s_{-1}^2+s_{-2}(n-k)}}{2s_{-2}}= \frac12 \Bigg( \frac{s_{-1}}{s_{-2}}+ \sqrt{\Big(\frac{s_{-1}}{s_{-2}}\Big)^2+\frac{n-k}{s_{-2}}}\Bigg)\\
}
Finally, using the definition of Gini means, we have 
\Eq{*}{
\E_{f_{-1,-2}}(x)&= \frac12 \Big(\G_{-1,-2}(x_1,\dots,x_k)+ \sqrt{\G_{-1,-2}^2(x_1,\dots,x_k)+\tfrac{n-k}k \G_{0,-2}^2(x_1,\dots,x_k)}\Big)\\
&= \frac12 \Big(\G_{-1,-2}+ \sqrt{\G_{-1,-2}^2+\tfrac{n-k}k \G_{0,-2}^2}\ \Big)(x_1,\dots,x_k),
}
where $k\in\{1,\dots,n\}$ is the maximal index satisfying $x_k \le 2\E_{f_{-1,-2}}(x)$.

Remarkably, in the particular case $x_n \le 2\E_{f_{-1,-2}}(x)$ we have $k=n$ and therefore $\E_{f_{-1,-2}}(x)=\G_{-1,-2}(x)$.

\newpage
%\bibliography{publ,funcequ,newbib,preprints}
%\bibliographystyle{plain}

\end{document}